\chardef\bslash=`\\ 
\newtheorem{thm}{Theorem}[section]
\newtheorem{lem}[thm]{Lemma}
\newtheorem{prop}[thm]{Proposition}
\theoremstyle{definition}
\newtheorem{defn}{Definition}[section]
\theoremstyle{remark}
\newcommand{\thmref}[1]{Theorem~\ref{#1}}
\newcommand{\lemref}[1]{Lemma~\ref{#1}}
\newcommand{\propref}[1]{Proposition~\ref{#1}}
\newcommand{\eval}[2][\right]{\relax
  \ifx#1\right\relax \left.\fi#2#1\rvert}
\title{A Hausdorff-Young inequality for measured groupoids}
\author{Patricia Boivin and Jean Renault}
\address{D\'epartment de Math\'ematiques, Universit\'e d'Orl\'eans,
45067 Orl\'eans, France}
\email{Jean.Renault@univ-orleans.fr, boivin.patricia@wanadoo.fr}
\keywords{Von Neumann algebras. Non-commutative integration
theory. Non-commutative $L^p$-spaces. Hausdorff-Young inequality.}
\subjclass{Primary 46L55; Secondary 43A35, 43A07, 43A15, 43A22.}
\begin{document}
\vskip5mm
\begin{abstract}
The classical Hausdorff-Young inequality for locally compact abelian groups
states that, for $1\le p\le 2$, the $L^{p}$-norm of
a function dominates the $L^{q}$-norm of its Fourier transform, where
$1/p+1/q=1$. By using the theory of non-commutative $L^p$-spaces and by reinterpreting
the Fourier transform, R.~Kunze (1958) [resp. M.~Terp (1980)] extended this
inequality to unimodular [resp. non-unimodular] groups. The analysis of the
$L^p$-spaces of the von Neumann algebra of a measured groupoid provides a further extension of the Hausdorff-Young
inequality to measured groupoids.
\end{abstract} 

\maketitle
\markboth{Patricia Boivin and Jean Renault}
{A Hausdorff-Young inequality for measured groupoids}

\renewcommand{\sectionmark}[1]{}

\section{Introduction.}

The classical Hausdorff-Young inequality for a locally compact abelian group $G$ says that, for $1\le p\le 2$ and for a compactly supported continuous function $f$  on $G$, the $L^{p}$-norm of $f$ dominates the $L^{q}$-norm of its Fourier transform $\hat f$, where $q$ is the conjugate exponent 
($1/p+1/q=1$): $$\|\hat f\|_q\le \|f\|_p.$$
The Fourier transform $\hat f$ is the function on the dual group $\hat G$ defined by
$$\hat f(\chi)=\int f(x)\overline{\chi(x)} dx.$$
The Haar measure of $\hat G$ is normalized so that the Fourier transform extends to an isometry from   $L^2(G)$ onto $L^2(\hat G)$.  Thus, the inequality holds for $p=2$. The proof is usually given as one of the first applications of the complex interpolation method, since the inequality also holds for $p=1$.

This inequality has been generalized to non-abelian locally compact groups. The main difficulty is to give a meaning to the space $L^q(\hat G)$ and to the Fourier transform ${\mathcal F}_p: L^p(G)\rightarrow L^q(\hat G)$ when $G$ is no longer abelian. The theory of operator algebras provides a solution. In the abelian case, the function $\hat f$ can be seen as the multiplication operator $M(\hat f)$ on $L^2(\hat G)$, i.e. $M(\hat f)\hat\xi=\hat f\hat\xi$. Up to unitary equivalence, this is the convolution operator $L(f)$ on $L^2(G)$, i.e. $L(f)\xi=f*\xi$. This still makes sense when $G$ is not abelian, provided one is willing to deal with unbounded operators. When $f$ is continuous and compactly supported, $L(f)$ is a bounded operator on $L^2(G)$. The von Neumann algebra generated by these operators is the group von Neumann algebra $VN(G)$, which will play the r\^ole of the von Neumann algebra $L^\infty (\hat G)$.

The case of unimodular groups was solved by R.~Kunze in \cite{kun:unimodular}. In that case, the von Neumann algebra $M=VN(G)$ has a trace $\tau$ and the non-commutative integration theory of I.~Segal (\cite{seg:integration}) is available. In fact, Kunze defines and studies non-commutative $L^p$-spaces $L^p(M,\tau)$ for a general von Neumann algebra $M$ endowed with a trace $\tau$. Its elements are unbounded operators on the Hilbert space $L^2(M,\tau)$ of the GNS construction affiliated with $M$. He then defines $L^q(\hat G)$ as $L^q(VN(G),\tau)$. For $1\le p\le 2$, the $p$-Fourier transform ${\mathcal F}_p: L^p(G)\rightarrow L^q(\hat G)$ is given by ${\mathcal F}_p(f)=L(f)$ just as above.

One had to wait for the Tomita-Takesaki theory and its developments to tackle the non-unimodular case. Relying on the work of U.~Haagerup (\cite{haa:lp}) on one hand and of A.~Connes (\cite{con:spatial}) and M.~Hilsum (\cite{hil:lp}) on the other on non-commutative $L^p$-spaces, M.~Terp solved the general case of a locally compact group in a paper \cite{ter:modular} unfortunately never published. 

She also developed in \cite{ter:interpolation}, in the case of a weight instead of a state, the interpolation approach of H.~Kosaki to non-commutative $L^p$-spaces, a framework particularly well-fitted to the generalization of the Hausdorff-Young inequality.

In the meantime, other Hausdorff-Young type inequalities were found. In particular, B.~Russo gave in {\cite{rus:integral} the following Hausdorff-Young inequality for integral operators. Let $X$ be a Borel space endowed with a $\sigma$-finite measure $dx$. Given $f\in L^2(X\times X)$, let $L(f)$ be the integral operator on $L^2(X)$ having $f$ for kernel. Then, for $1\le p\le 2$ and $q$ conjugate exponent of $p$, the following inequality holds:
$$\|L(f)\|_q\le\max{(\|f\|_{p,q}, \|f^*\|_{p,q})},$$
where $\|L(f)\|_q$ is the $q$-Schatten norm of $L(f)$, $\|f\|_{p,q}=\big(\int(\int |f(x,y)|^p dx)^{q/p}dy\big)^{1/q}$ is the mixed norm introduced in \cite {bp:mixed} and $f^*(x,y)=\overline{f(y,x)}$.

When $p=1$, $q=\infty$; the corresponding norm is the usual operator norm and the inequality becomes
$$\|L(f)\|\le\max{(\sup_y(\int |f(x,y)| dx), \sup_x(\int |f(x,y)| dy)})$$
which is a well-known consequence of the Cauchy-Schwarz inequality. When $p=2$, $q=2$; the corresponding norm is the Hilbert-Schmidt norm; both terms on the right hand side are equal by Fubini's theorem; as well known, we have the equality
$$\|L(f)\|_2=\big(\int\int |f(x,y)|^2 dxdy\big)^{1/2}.$$
As in the classical case, the case $1<p<2$ is obtained by complex interpolation. Although the mixed norms
$\|f\|_{p,q}$ are interpolation norms just as the usual $L^p$-norms are, one cannot conclude directly, due to the presence of a $\max$ of these norms.

The similarity between the group case and the case of integral operators is striking. It can still be made stronger by introducing the measured groupoid $G=X\times X$. Then the formulation is almost identical to the group case: the relevant von Neumann algebra is $M=VN(G)$ (it is the algebra of all bounded operators on $L^2(X)$ but acting by left multiplication on $L^2(G)$), $L(f)$ is the left convolution operator on $L^2(G)$ and the Schatten space $S_q$ is the non-commutative $L^q$-space of $M$ endowed with its natural trace. The main difference is that the usual $L^p$-norm of $f$ has been replaced by the maximum of the mixed norms $\|f\|_{p,q}$ and $\|f^*\|_{p,q}$.

Let us fix our notation for groupoids. The unit space of the groupoid $G$ will usually be denoted by $X=G^{(0)}$. We shall use  greek letters like $\gamma, \ldots$ for elements of $G$ and roman letters $x,y,\ldots$ for elements of $G^{(0)}$. The range and source maps will be denoted by $r,s:G\rightarrow G^{(0)}$.  For $x\in G^{(0)}$, we define $G^x=r^{-1}(x)$ and $G_x=s^{-1}(x)$. The inverse of $\gamma\in G$ is denoted by $\gamma^{-1}$. A measured groupoid (see \cite{hah:measured groupoids I}, \cite{hah:measured groupoids II}) consists of a Borel groupoid $G$, a measurable Haar system $\lambda$ made of measures $\lambda^x$ on $G^x$ satisfying the left invariance and the measurability property, and a quasi-invariant measure $\mu$ on $G^{(0)}$.  The quasi-invariance of $\mu$ is expressed by the equivalence of the measures $\nu=\mu\circ\lambda$ and $\nu^{-1}=\mu\circ\tilde \lambda$, where $\tilde\lambda$ is the right invariant Haar system consisting of the measures $\lambda_x=(\lambda^x)^{-1}$ on $G_x$. By definition, the modular function is the Radon-Nikodym derivative $\delta=d\nu/d\nu^{-1}$. We say that the measured groupoid is unimodular if $\delta=1$. To avoid technicalities, we assume that $G$ is a second countable locally compact Hausdorff topological groupoid, that the measures are Radon measures, that $\lambda$ is a continuous Haar system and that the modular function is continuous. Let us give some examples of measured groupoids: locally compact groups (with a chosen left Haar measure), measured spaces and more generally transformation groups $(X,G)$ equipped with a quasi-invariant measure (with the appropriate assumptions to fit our hypotheses). As further examples: the above measured groupoid $X\times X$ and the holonomy groupoid of a foliation equipped with a Lebesgue measure. The mixed norms still make sense in this context. For $f$ non-negative and measurable on $G$ or in $C_c(G)$, one can define
 $$\|f\|_{p,q}=\big(\int (\int |f|^pd\lambda^x)^{q/p} d\mu(x)\big)^{1/q}.$$
 In the case of a group, $\|f\|_{p,q}=\|f\|_p$ while in the case of a space, $\|f\|_{p,q}=\|f\|_q$. On the other hand, we have the regular representation of $G$ on $L^2(G)$ which defines the von Neumann algebra $M=VN(G)$ and provides the operator $L(f)$. Thus, we have all the ingredients to express a Hausdorff inequality for measured groupoids. As we shall see, it will have the following form
 $$\|{\mathcal F}_p(f)\|_{L^q(M)}\le \max{(\|f\|_{p,q}, \|f^*\|_{p,q})},$$
 where $1\le p\le 2$, ${\mathcal F}_p(f)=\Delta^{1/2q}L(f)\Delta^{1/2q}$, $\Delta$ is the operator of multiplication by $\delta$ on $L^2(G)$ and $f^*(\gamma)=\overline{f(\gamma^{-1})}$.

\section{Non-commutative $L^p$-spaces for measured groupoids.}

Let $(G,\lambda,\mu)$ be a measured groupoid. As said above, we assume that $G$ is a second countable locally compact Hausdorff topological groupoid, that the measures are Radon measures, that $\lambda$ is a continuous Haar system and that the modular function $\delta$ is continuous. We denote by $\nu$ the measure
$\nu=\mu\circ\lambda$ on $G$, by $\nu^{-1}$ its image under the inverse map and by $\nu_0$
the symmetric measure $\nu_0=\delta^{-1/2}\nu$. 

It is convenient for our purpose to present the von Neumann algebra $M=VN(G)$ of the measured groupoid as the left algebra of a modular Hilbert algebra (se \cite{tak:tomita}). We endow the space $C_c(G)$ of compactly supported continuous functions on $G$ with the convolution product
$$f*g(\gamma)=\int f(\gamma\eta)g(\eta^{-1})d\lambda^{s(\gamma)}(\eta),$$
the involution
$$f^*(\gamma)=\overline{f(\gamma^{-1})}$$
the inner product
$$(f|g)=\int f(\gamma)\overline{g(\gamma)}d\nu^{-1}(\gamma)$$
and the complex one-parameter group $\Delta(z)$ of automorphisms of $C_c(G)$
$$\Delta(z)f=\delta^z f.$$
The axioms $(I)$ to $(VIII)$ of  \cite[Definition 2.1]{tak:tomita} are readily verified.
The left and right convolution operators $L(f)$ and $R(g)$ are respectively defined by
$$L(f)\xi=f*\xi\qquad R(g)\eta=\eta*g.$$
They extend to bounded linear operators, still denoted by $L(f)$ and $R(g)$, on the completed Hilbert space $H=L^2(G,\nu^{-1})$. The map $L$ is a representation of the $*$-algebra $C_c(G)$. We define
$M=VN(G)$ as the von Neumann algebra generated by $\{L(f), f\in C_c(G)\}$. According to the general theory, the von Neumann algebra generated by $\{R(g), g\in C_c(G)\}$ is the commutant $M'$ of $M$. The polar decomposition $S=J\Delta^{1/2}$ of the closure $S$ of the operator $\xi\mapsto\xi^*$, viewed as an unbounded antilinear operator on $H$ provides the ingredients of the Tomita-Takesaki theory. The modular operator $\Delta$ is the operator of multiplication by $\delta$; it is self-adjoint positive; its domain $Dom(\Delta)$ is the subspace of the $\xi$'s in
$H$ such that $\delta\xi\in H$; it contains $C_c(G)$ as an essential domain. The modular involution $J$ is given by $J\xi=\delta^{1/2}\xi^*$. The canonical weight $\varphi_0$ on $M$ is defined by
$$\varphi_0(L(f)^*L(g))=(g|f)\qquad\hbox{for}\,  f,g\in C_c(G).$$
Another useful formula is 
$$\varphi_0(L(f))=\int f_{|G^{(0)}}d\mu \qquad\hbox{for}\,  f\in C_c(G).$$
The modular group of the weight $\varphi_0$ is given by $\sigma_t(T)=\Delta^{it}T\Delta^{-it}$ for $T\in M$.

The spatial theory (see \cite{con:spatial, hil:lp}) uses the dual weight $\psi_0$ on $M'$, defined by $\psi_0(T)=\varphi_0(JTJ)$. We have
$$\psi_0(R(Jf)^*R(Jg))=(g|f)\qquad\hbox{for}\,  f,g\in C_c(G).$$
In our context, it can be expressed as follows. A vector $\xi\in H$ is called left bounded if the operator $L(\xi)$ defined by $L(\xi)f=\xi*f$ for $f\in C_c(G)$ is bounded. The space of left bounded operators is denoted by ${\mathcal A}_l$. 
One associates to each normal semi-finite weight $\varphi$ on $M$ a self-adjoint positive operator $T$ on $H$ such that
$$\|T^{1/2}\xi\|^2=\varphi(L(\xi)L(\xi)^*)$$
for all $\xi\in {\mathcal A}_l$. The operator $T$ is called the spatial Radon-Nikodym derivative of the weight $\varphi$ and is denoted by $\displaystyle{d\varphi\over d\psi^{(0)}}$. It is called integrable if $\varphi$ is a state. One then defines its integral as $\int T d\psi_0=\varphi(1)$. The correspondence $\varphi\mapsto \displaystyle{d\varphi\over d\psi^{(0)}}$ extends to $\varphi\in M_*$. The space $L^1(M,\psi_0)$ is defined as its image. On the other hand, if $\varphi\in M_*$, $f\mapsto \varphi(L(f))$ is a Radon measure on  $C_c(G)$ which is absolutely continuous with respect to $\nu=\mu\circ\lambda$.  We denote by $\displaystyle{d\varphi\over d\nu_0}$ its usual Radon-Nikodym derivative with respect to the symmetric measure $\nu_0$.  Explicitly, if $\varphi(T)=(T\xi |\eta)$ with $\xi,\eta\in H$, one finds that $\displaystyle{d\varphi\over d\nu_0}$ is the coefficient
$$(\xi',\eta')(\gamma)=(L(\gamma)\xi'\circ s(\gamma)|\eta'\circ r(\gamma))$$
of the left regular representation of $G$ on the field of Hilbert spaces $x\mapsto L^2(G^x,\lambda^x)$, where $\xi'=\delta^{-1/2}\xi$ and $\eta'=\delta^{-1/2}\eta $. We define the space ${\mathcal L}^1(\hat G)$ as the image of the map $\varphi\in M_*\mapsto \displaystyle{d\varphi\over d\nu_0}$. The three spaces $M_*$, $L^1(M,\psi_0)$ and ${\mathcal L}^1(\hat G)$ are isomorphic as linear spaces, and also as Banach spaces with the appropriate norms. When $\varphi(T)=(T\xi |\xi)$ with $\xi\in C_c(G)$, the spatial Radon-Nikodym derivative  $T=\displaystyle{d\varphi\over d\psi^{(0)}}$ is the closure of the operator $\Delta^{1/2}L(\tilde F)\Delta^{1/2}$ defined on $C_c(G)$, where $F=\displaystyle{d\varphi\over d\nu_0}$ and $\tilde F(\gamma)=F(\gamma^{-1})$. When $G$ is a locally compact group, ${\mathcal L}^1(\hat G)$ is the usual Fourier algebra $A(G)$.

The Hilbert space $H=L^2(G,\nu^{-1})$ admits the direct integral decomposition $H=\int^\oplus H_x d\mu_x$, where $H_x=L^2(G_x,\lambda_x)$. A decomposable operator $T=\int^\oplus T_x d\mu_x$ is called $\alpha$-homogeneous if for a.e. $\gamma\in G$,
$$R(\gamma)T_{s(\gamma)}\subset \delta^{-\alpha}(\gamma)T_{r(\gamma)}R(\gamma),$$
where $R(\gamma)$ is the isometry $H_{s(\gamma)}\rightarrow H_{r(\gamma)}$ given by $R(\gamma)\xi(\gamma')=\xi(\gamma'\gamma)$. The elements of $M$ (and more generally the operators affiliated with $M$) are 0-homogeneous. The operator $\Delta$ as well as the elements of $L^1(M,\psi_0)$ are $(-1)$-homogeneous. For $p\in [0,\infty[$, $L^p(M,\psi_0)$ is the set of closed densely defined $(-1/p)$-homogeneous operators $H$ such that $|T|^p$ belongs to $L^1(M,\psi_0)$.  The $p$-norm of $T\in L^p(M,\psi_0)$ is defined as $\|T\|_p=\big(\int |T|^p d\psi_0\big)^{1/p}$. For $p=\infty$, we define  $L^\infty(M,\psi_0)=M$

The well-known isomorphism of $L^2(X\times X)$ onto the space of Hilbert-Schmidt operators on $L^2(X)$ which associates to $f$ the integral operator $L(f)$ with kernel $f$ and the Fourier-Plancherel transform of $L^2(G)$ onto $L^2(\hat G)$, where $G$ is a locally compact abelian group can be viewed as particular cases of the following general result for measured groupoids, which is proved in \cite{boi:these}.

\begin{thm}\label{Plancherel} Let $(G,\lambda,\mu)$ be a measured groupoid. There exists a unique isometry ${\mathcal F}_2: L^2(G,\nu_0)\rightarrow L^2(M,\psi_0)$ such that, for $f\in C_c(G)$, ${\mathcal F}_2(f)$ is the the closure of the operator $\Delta^{1/4}L(f)\Delta^{1/4}$ defined on $C_c(G)$.
\end{thm}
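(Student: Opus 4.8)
The plan is to construct $\mathcal{F}_2$ on a dense subspace, check it is isometric there, and then extend by continuity, using the one remaining point — surjectivity onto $L^2(M,\psi_0)$ — as the place where the structure theory of $L^2$ does the work. Concretely, for $f\in C_c(G)$ set $\mathcal{F}_2(f) = \overline{\Delta^{1/4}L(f)\Delta^{1/4}}$, the closure of the operator defined on $C_c(G)$ (which is an essential domain for $\Delta$, so this makes sense). First I would verify that this operator is $(-1/2)$-homogeneous and that $|\mathcal{F}_2(f)|^2 \in L^1(M,\psi_0)$, i.e. that $\mathcal{F}_2(f)\in L^2(M,\psi_0)$. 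Since $\Delta$ is $(-1)$-homogeneous and $L(f)$ is $0$-homogeneous, the product $\Delta^{1/4}L(f)\Delta^{1/4}$ should be $(-1/2)$-homogeneous by a direct computation with the intertwining relation $R(\gamma)T_{s(\gamma)}\subset \delta^{-\alpha}(\gamma)T_{r(\gamma)}R(\gamma)$; the fractional powers $\Delta^{1/4}$ being decomposable multiplication operators, homogeneity degrees add.

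The second step is the isometry identity $\|\mathcal{F}_2(f)\|_{L^2(M,\psi_0)} = \|f\|_{L^2(G,\nu_0)}$ for $f\in C_c(G)$. Here I would use that $\|\mathcal{F}_2(f)\|_2^2 = \int |\mathcal{F}_2(f)|^2\, d\psi_0 = \psi_0'\big(\mathcal{F}_2(f)^*\mathcal{F}_2(f)\big)$ in the appropriate sense, and identify $|\mathcal{F}_2(f)|^2 = \mathcal{F}_2(f)^*\mathcal{F}_2(f)$ with the element of $L^1(M,\psi_0)$ corresponding, under the isomorphism $L^1(M,\psi_0)\cong \mathcal{L}^1(\hat G)$ recalled in the excerpt, to the coefficient function of $f^* * f$ (or rather $f * f^*$, depending on conventions), whose integral $\int \cdot\, d\psi_0$ equals $\varphi_0$ of the corresponding operator. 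Using the formula $\varphi_0(L(f)^*L(g)) = (g|f)$ and the relation between $\Delta^{1/2}L(\tilde F)\Delta^{1/2}$ and the spatial derivative stated just before the theorem, this should collapse to $(f|f)$ computed against the symmetric measure $\nu_0$ rather than $\nu^{-1}$ — the $\Delta^{1/4}$ factors on each side are precisely what convert the $\nu^{-1}$-inner product into the $\nu_0$-inner product, since $\nu_0 = \delta^{-1/2}\nu$ and $d\nu^{-1} = \delta^{-1}d\nu$. This bookkeeping with the modular function is the computational heart of the argument.

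Having an isometry from the dense subspace $C_c(G)\subset L^2(G,\nu_0)$ into $L^2(M,\psi_0)$, it extends uniquely to an isometry $\mathcal{F}_2: L^2(G,\nu_0)\to L^2(M,\psi_0)$; uniqueness of the extension is automatic. The remaining point is that this extension is onto. For this I would show that the range contains a total subset of $L^2(M,\psi_0)$: every $T\in L^2(M,\psi_0)$ is, by definition, a $(-1/2)$-homogeneous closed operator with $|T|^2\in L^1(M,\psi_0)$, and the elements of $L^1(M,\psi_0)\cong M_*$ arising from vector states with $C_c(G)$-vectors are dense; tracing back through the identification $T = \overline{\Delta^{1/2}L(\tilde F)\Delta^{1/2}}$ for $F = d\varphi/d\nu_0\in \mathcal{L}^1(\hat G)$ and taking $\varphi$ of the form $\varphi(\cdot) = (\cdot\,\xi|\xi)$ with $\xi\in C_c(G)$ exhibits such $T$ as $\mathcal{F}_2$ of an explicit $L^2$-function (a convolution square), and these are dense in $L^2(G,\nu_0)$.

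I expect the main obstacle to be the second step: making rigorous the passage from the formal expression $\Delta^{1/4}L(f)\Delta^{1/4}$ to an honest element of the spatially-defined $L^2(M,\psi_0)$ and computing its $2$-norm, since this requires controlling unbounded operators, justifying that the indicated closure lies in the Hilsum space, and reconciling the three competing descriptions of $L^1$-type objects (via $M_*$, via $\psi_0$, via $\mathcal{L}^1(\hat G)$) with all modular factors in the right places. Once the norm identity is established on $C_c(G)$, the rest is soft functional analysis.
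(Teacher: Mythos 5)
A preliminary remark: the paper does not actually prove this theorem --- it is quoted from \cite{boi:these} --- so there is no in-paper proof to compare your attempt against. Judged on its own terms, your skeleton (define $\mathcal{F}_2(f)$ on $C_c(G)$, prove the norm identity there, extend by density, treat surjectivity separately) is the natural and correct one, and the formal computation you gesture at does close: writing $\mathcal{F}_2(f)^*\mathcal{F}_2(f)$ in the canonical form $\overline{\Delta^{1/2}L(\tilde F)\Delta^{1/2}}$ via the commutation rule $\Delta^{z}L(f)\Delta^{-z}\subset L(\delta^{z}f)$ gives $\tilde F=\delta^{-1/4}\bigl(f^{*}*(\delta^{1/2}f)\bigr)$, whose restriction to $G^{(0)}$ is $x\mapsto\int|f|^{2}\delta^{1/2}\,d\lambda_x$, and integrating against $\mu$ yields exactly $\int|f|^{2}\,d\nu_0$. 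Note that this is \emph{not} the coefficient of $f^{*}*f$ as you tentatively wrote --- the modular factors $\delta^{1/2}$ inside the convolution and $\delta^{-1/4}$ outside are essential, and they are precisely what converts the $\nu^{-1}$-inner product $\varphi_0(L(f)^{*}L(f))=(f|f)$ into the $\nu_0$-norm.

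Two genuine gaps remain. First, the step you yourself flag as the ``main obstacle'' is the whole content of the theorem: one must show that the adjoint of the closure of $\Delta^{1/4}L(f)\Delta^{1/4}$ is exactly (not merely an extension of) the closure of $\Delta^{1/4}L(f^{*})\Delta^{1/4}$, and that the product $T^{*}T$ of these unbounded operators coincides with the closure of the formal composition above; the paper's own remark in Section 3 only yields that the adjoint \emph{extends} $\Delta^{1/4}L(f^{*})\Delta^{1/4}$, and without equality the identification of $|T|^{2}$ with a specific element of $L^{1}(M,\psi_0)$ does not follow. This is where the spatial-derivative machinery of Connes--Hilsum (or Terp's interpolation description, under which $\mathcal{F}_2(f)=\mu_2(f)$ for $f\in C_c(G)*C_c(G)$) has to be invoked in earnest rather than cited in passing. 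Second, your surjectivity argument (``convolution squares trace back to a dense set'') is not justified as stated --- positivity-based density in $L^{1}$ does not automatically give density of square roots in $L^{2}$; the clean route is Terp's result that $\mu_2$ has dense range, which the paper records in Section 2. That said, the theorem as stated only asserts an isometry \emph{into} $L^{2}(M,\psi_0)$, with uniqueness coming from density of $C_c(G)$ in $L^{2}(G,\nu_0)$, so this last point is more than the statement strictly requires.
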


In \cite{ter:interpolation}, M.~Terp defines the non-commutative $L^p$-spaces by the complex interpolation method in a framework perfectly well-fitted to ours. The data consists of a von Neumann algebra $M$ and a (normal, faithful and semi-finite) weight $\varphi$. She defines the ``intersection'' $L$ of $M$ and its predual $M_*$ and then embeds $M$ and $M_*$ into $L^*$, turning  $(M,M_*)$ into a compatible pair of Banach spaces. She also defines (Section 2.3) for all $p\in [1,\infty]$ a linear, norm-decreasing, injective map $\mu_p: L\rightarrow  L^p(M,\psi)$ which has dense range for $p<\infty$ and where $\psi$ is the dual weight of $M'$ in the GNS representation of $(M,\varphi)$. We apply this to $(M=VN(G),\varphi=\varphi_0)$. We observe that $C_c(G)*C_c(G)$ is contained in $L$ and that for $f\in C_c(G)*C_c(G)$, $\mu_p(f)$ is the closure of the operator $\Delta^{1/2p}L(f)\Delta^{1/2p}$ defined on $C_c(G)$. For $p=1$, $f\mapsto \mu_1(\tilde f)$ extends to the isomorphism ${\mathcal L}^1(\hat G)\rightarrow L^1(M,\psi_0)$ described earlier. For $p=2$, $f\mapsto \mu_2(f)$ extends to the above Fourier-Plancherel isometry ${\mathcal F}_2: L^2(G,\nu_0)\rightarrow L^2(M,\psi_0)$. For $p=\infty$, $\mu_\infty=L$ is the left regular representation.

\section{A Hausdorff-Young inequality.}

We use the same notation as in the previous section. Because of our assumptions, $\Delta$ as well as $\Delta^z$, where $z\in{\bf C}$, admit $C_c(G)$ as an essential domain.

\begin{lem} Let $z\in{\bf C}$ and $f\in C_c(G)$. We have the commutation rule
$$L(f)\Delta^{-z}\subset \Delta^{-z} L(\delta^z f) .$$
\end{lem}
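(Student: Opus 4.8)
The plan is to prove the identity first on the core $C_c(G)$ by a direct computation with the convolution formula, and then upgrade it to the asserted unbounded-operator inclusion by a closedness/core argument. Recall from the remark preceding the statement that for $z\in\mathbf{C}$ the function $\delta^{-z}=e^{-z\log\delta}$ is continuous, $\Delta^{-z}$ is the (closed, densely defined) multiplication operator by $\delta^{-z}$ on $H=L^2(G,\nu^{-1})$, and $C_c(G)$ is an essential domain for it; also $\delta^z f\in C_c(G)$, so $L(\delta^z f)$ is a well-defined bounded operator.

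\emph{Step 1: the identity on $C_c(G)$.} For $\xi\in C_c(G)$ I would write out both sides using $f*g(\gamma)=\int f(\gamma\eta)g(\eta^{-1})\,d\lambda^{s(\gamma)}(\eta)$. Since $\delta(\eta^{-1})=\delta(\eta)^{-1}$, the left side is
$$L(f)\Delta^{-z}\xi(\gamma)=\int f(\gamma\eta)\,\delta(\eta^{-1})^{-z}\,\xi(\eta^{-1})\,d\lambda^{s(\gamma)}(\eta)=\int f(\gamma\eta)\,\delta(\eta)^{z}\,\xi(\eta^{-1})\,d\lambda^{s(\gamma)}(\eta),$$
while, using that $\delta$ is a homomorphism (so $\delta(\gamma\eta)^z=\delta(\gamma)^z\delta(\eta)^z$), the right side is
$$\Delta^{-z}L(\delta^z f)\xi(\gamma)=\delta(\gamma)^{-z}\int \delta(\gamma\eta)^z f(\gamma\eta)\,\xi(\eta^{-1})\,d\lambda^{s(\gamma)}(\eta)=\int \delta(\eta)^{z} f(\gamma\eta)\,\xi(\eta^{-1})\,d\lambda^{s(\gamma)}(\eta),$$
the factors $\delta(\gamma)^{-z}$ and $\delta(\gamma)^{z}$ cancelling. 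Hence the two expressions agree on $C_c(G)$ (all the convolutions involved staying in $C_c(G)$).

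\emph{Step 2: from $C_c(G)$ to the operator inclusion.} Note $\mathrm{Dom}\bigl(L(f)\Delta^{-z}\bigr)=\mathrm{Dom}(\Delta^{-z})$ since $L(f)$ is bounded. Given $\xi\in\mathrm{Dom}(\Delta^{-z})$, pick $\xi_n\in C_c(G)$ with $\xi_n\to\xi$ and $\Delta^{-z}\xi_n\to\Delta^{-z}\xi$ (possible as $C_c(G)$ is a core). Then $L(f)\Delta^{-z}\xi_n\to L(f)\Delta^{-z}\xi$ and $L(\delta^z f)\xi_n\to L(\delta^z f)\xi$ by boundedness of $L(f)$ and $L(\delta^z f)$; by Step 1, $\Delta^{-z}\bigl(L(\delta^z f)\xi_n\bigr)=L(f)\Delta^{-z}\xi_n$ converges. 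Since $\Delta^{-z}$ is closed, $L(\delta^z f)\xi\in\mathrm{Dom}(\Delta^{-z})$ and $\Delta^{-z}L(\delta^z f)\xi=L(f)\Delta^{-z}\xi$, which is exactly the inclusion $L(f)\Delta^{-z}\subset\Delta^{-z}L(\delta^z f)$.

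I do not anticipate a genuine obstacle: the content is a routine commutation computation, and the only points deserving a word of justification are that $\delta$ may be taken to be a homomorphism under the standing continuity hypothesis (used in Step 1), and that $\Delta^{-z}$ is closed with $C_c(G)$ a core (used in Step 2). The lemma is a preparatory commutation rule, presumably to be combined later with the Plancherel isometry $\mathcal{F}_2$ and the maps $\mu_p$ to analyse $\mathcal{F}_p(f)=\Delta^{1/2q}L(f)\Delta^{1/2q}$.
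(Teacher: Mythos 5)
Your proof is correct, but it takes a more elementary, self-contained route than the paper. The paper disposes of the lemma in one line by appealing to the already-verified axioms of the modular Hilbert algebra structure on $C_c(G)$: since $\Delta^z$ implements the automorphism $\Delta(z):f\mapsto\delta^z f$ (a general fact from Takesaki's theory, given the axioms checked in Section 2), one gets $\Delta^z L(f)\Delta^{-z}\subset L(\delta^z f)$ and hence the stated inclusion. You instead verify the underlying identity by hand: the pointwise computation with the convolution formula on $C_c(G)$, using the multiplicativity $\delta(\gamma\eta)=\delta(\gamma)\delta(\eta)$, and then the extension to all of $\mathrm{Dom}(\Delta^{-z})$ via boundedness of $L(f)$ and $L(\delta^z f)$, closedness of the multiplication operator $\Delta^{-z}$, and the core property of $C_c(G)$. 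The cocycle identity for $\delta$ that you flag is precisely what is hidden in the paper's appeal to the Hilbert algebra axioms (it is needed for $\Delta(z)$ to be an algebra automorphism), so your two ``points deserving justification'' are exactly the right ones. What your version buys is independence from the citation to \cite{tak:tomita}; what the paper's version buys is brevity and the reuse of structure it has already set up. Both arguments establish the inclusion with the correct domains.
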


\begin{proof} We have seen that $\Delta^z$ implements the automorphism $\Delta(z)$ of the modular Hilbert algebra $C_c(G)$. Therefore, $\Delta^zL(f)\Delta^{-z}\subset L(\delta^z f)$.
\end{proof}

Since $\Delta^z$ as well as $L(f)$ leave $C_c(G)$ invariant, $C_c(G)$ is
contained in the domain of $\Delta^z L(f)\Delta^z$. Therefore 
$\Delta^z L(f)\Delta^z$ admits an adjoint; moreover the equality
$$(\Delta^z L(f)\Delta^z\xi|\eta)=(\xi|\Delta^{\overline z} L(f^*)\Delta^{\overline z}\eta)$$
for $\xi,\eta\in C_c(G)$ shows that this adjoint is an extension of
$\Delta^{\overline z} L(f^*)\Delta^{\overline z}$. In particular,
$\Delta^z L(f)\Delta^z$ is closable.

\begin{defn} Given $\alpha\in{\bf C}$ and $\beta=(1-2\alpha)^{-1}$, we define the
$\beta$-Fourier transform ${\mathcal F}_\beta(f)$ of $f\in C_c(G)$ as the closure of  the operator $\Delta^\alpha L(f)\Delta^\alpha$ defined on $C_c(G)$.
\end{defn}

We have met  particular cases of this definition: when $f$ belongs to $C_c(G)*C_c(G)$ and $\alpha=1/2q$, where $q\in [1,\infty]$, then $\beta=p$ is the conjugate exponent of $q$ and 
${\mathcal F}_p(f)=\mu_q(f)$, where $\mu_q$ is Terqs symmetric embedding recalled earlier. In that case, ${\mathcal F}_p(f)$ belongs to $L^q(M,\psi_0)$.  On the other hand, ${\mathcal F}_2(f)$ belongs to $L^2(M,\psi_0)$ and ${\mathcal F}_1(f)=L(f)$ belongs to $L^\infty(M,\psi_0)=M$ for all $f\in C_c(G)$.

Our goal is to show that, for $1\le p\le 2$, and $f\in C_c(G)$, ${\mathcal F}_p(f)$
belongs to $L^{q}(M,\psi_0)$ where $q$ is the conjugate exponent of $p$.  and satisfies the following Hausdorff-Young
inequality:
$$\|{\mathcal F}_{p}(f)\|_{q}\le \max(\|f\|_{p,q},\|f^*\|_{p,q})$$
where $\|f\|_{p,q}$ denotes the following mixed norm, which generalizes the mixed norms of \cite{bp:mixed}.

\begin{defn} Given $p,q\in[1,\infty]$ and $f\in C_c(G)$, the mixed norm
$\|f\|_{p,q}$ of $f$ is defined as
$$\|f\|_{p,q}=\big(\int \big(\int |f|^p d\lambda^x \big)^{q/p} d\mu(x)\big)^{1/q}$$
with the obvious modifications when $p$ or $q$ is infinite.
\end{defn}

We first verify that this inequality holds when $p=1$ or $p=2$. Indeed, for
$p=1$, this is the well-known inequality
$$\|L(f)\|\le \max(\|f\|_{1,\infty},\|f^*\|_{1,\infty})$$
which is obtained by the Cauchy-Schwarz inequality.
For $p=2$, we have
$$\|{\mathcal F}_2(f)\|_2^2=\int |f|^2 d\nu_0\le (\int |f|^2 d\nu)^{1/2}(\int |f|^2
d\nu^{-1})^{1/2}=\|f\|_{2,2}\|f^*\|_{2,2}$$
again by Cauchy-Schwarz. We want to deduce the general case $1\le p\le 2$ from
these two extremal cases by interpolation. Although the norm $\|.\|_{p,q}$
interpolates between 1 and 2 (in fact between 1 and $\infty$), we cannot
interpolate directly because our formula involves the maximum of two mixed norms.
We proceed exactly as in \cite{ter:modular}. We shall only give the necessary modifications. It relies on the following characterization of $L^p(M,\psi_0)$.

\begin{prop}\label{characterization} \cite[Proposition 2.3]{ter:modular} Let $p\in [1,\infty]$ and define $q$ by $1/p+1/q=1$. Let $T$ be a closed densely defined $(-1/p)$-homogeneous operator on $H$. Then, the following conditions are equivalent:
\begin{enumerate}
\item $T\in  L^p(M,\psi_0)$,
\item there exists a constant $C\ge 0$ such that
$$\forall S\in L^q(M,\psi_0), \forall \xi\in{\mathcal A}_l\cap Dom(T), \forall \eta\in {\mathcal A}_l\cap Dom(S):$$
$$|(T\xi, S\eta)|\le C\|S\|_q\|L(\xi)\|\|L(\eta)\|.$$
\end{enumerate}
If $T\in  L^p(M,\psi_0)$, then $\|T\|_p$ is the smallest $C$ satisfying $(ii)$.
\end{prop}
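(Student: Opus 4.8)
The plan is to follow the proof of \cite[Proposition~2.3]{ter:modular}, adapting it to the groupoid setting, and to rely on two standard properties of the spaces $L^p(M,\psi_0)$ of \cite{hil:lp}: the Hölder inequality $L^p(M,\psi_0)\cdot L^q(M,\psi_0)\subset L^1(M,\psi_0)$ with the corresponding norm estimate, and the duality $L^q(M,\psi_0)^{*}=L^p(M,\psi_0)$ for $1\le q<\infty$. The first step is to reduce both implications to an identity of the form
$$(T\xi|S\eta)=\int L(\eta)^{*}\,S^{*}\,T\,L(\xi)\,d\psi_0 ,$$
valid for $T$ a closed densely defined $(-1/p)$-homogeneous operator affiliated with $M$, $S\in L^q(M,\psi_0)$, $\xi\in\mathcal{A}_l\cap Dom(T)$ and $\eta\in\mathcal{A}_l\cap Dom(S)$, under the proviso that the operator on the right is an integrable element of $L^1(M,\psi_0)$. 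Note that it is $(-1)$-homogeneous, since $L(\xi),L(\eta)\in M$ are $0$-homogeneous and $S^{*}$ is $(-1/q)$-homogeneous. The point (up to the natural identifications between $H$, $L^2(G,\nu_0)$ and $L^2(M,\psi_0)$) is that $T\xi$ is the vector of $H$ represented, through the GNS map of $\varphi_0$, by the operator $TL(\xi)$, similarly for $S\eta$, so that the $H$-inner product of the two is their trace pairing $\int(SL(\eta))^{*}(TL(\xi))\,d\psi_0$. I would prove the identity first for $\xi,\eta\in C_c(G)$ and $S=\mu_q(g)$ with $g\in C_c(G)*C_c(G)$, where $\mu_q(g)$ is the closure of $\Delta^{1/2q}L(g)\Delta^{1/2q}$, by a direct computation from the explicit formulas of Section~2 and from \thmref{Plancherel}, and then extend to general $T$ and $S$ by density and by matching homogeneity degrees.

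Granting the identity, the implication $(i)\Rightarrow(ii)$ is immediate. If $T\in L^p(M,\psi_0)$, then $L(\eta)^{*}S^{*}TL(\xi)$ lies in $M\cdot L^q(M,\psi_0)\cdot L^p(M,\psi_0)\cdot M\subset L^1(M,\psi_0)$, and Hölder's inequality (with $1/p+1/q=1$) gives $\|L(\eta)^{*}S^{*}TL(\xi)\|_{1}\le\|L(\eta)\|\,\|S\|_{q}\,\|T\|_{p}\,\|L(\xi)\|$. Since $|\int R\,d\psi_0|\le\|R\|_{1}$ on $L^1(M,\psi_0)$, this yields $(ii)$ with $C=\|T\|_{p}$, so the least constant admissible in $(ii)$ is at most $\|T\|_{p}$.

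For $(ii)\Rightarrow(i)$ and the reverse estimate, I would fix an approximate unit $(L(\xi_i))$ for $M$ with $\xi_i\in C_c(G)\subset\mathcal{A}_l\cap Dom(T)$, $\|L(\xi_i)\|\le1$ and $L(\xi_i)\to1$ strongly. For fixed $S\in L^q(M,\psi_0)$ and $\eta\in\mathcal{A}_l\cap Dom(S)$, the scalars $(T\xi_i|S\eta)$ are bounded by $C\|S\|_{q}\|L(\eta)\|$ and, by the identity above together with $L(\xi_i)\to1$, converge; letting $\eta$ also run through the approximate unit produces a linear functional $\Phi$ on $L^q(M,\psi_0)$ — defined first on the dense subspace spanned by the $\mu_q(g)$ and then extended by continuity — with $|\Phi(S)|\le C\|S\|_{q}$. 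For $1\le q<\infty$, the duality $L^q(M,\psi_0)^{*}=L^p(M,\psi_0)$ yields $T'\in L^p(M,\psi_0)$ with $\|T'\|_{p}\le C$ representing $\Phi$; for $q=\infty$ (that is, $p=1$) one instead checks that $\Phi$ is a normal functional on $M=L^\infty(M,\psi_0)$, hence given by $T'\in M_{*}=L^1(M,\psi_0)$ with $\|T'\|_{1}\le C$. In both cases $T$ and $T'$ are closed $(-1/p)$-homogeneous operators inducing the same pairing against the vectors $S\eta$; since these span a dense subspace of $H$, the homogeneity forces $T=T'$, so $T\in L^p(M,\psi_0)$ and $\|T\|_{p}\le C$. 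Together with the previous paragraph, $\|T\|_{p}$ is exactly the least $C$ in $(ii)$.

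The hard part will be the identity of the first paragraph: one must justify rigorously, inside the non-unimodular spatial framework of \cite{con:spatial,hil:lp}, that the action of a homogeneous operator on a left bounded vector is operator multiplication in the $L^p$-spaces and that the inner product of $H$ computes the non-commutative integral — this is exactly where the concrete groupoid data (the measures $\nu$, $\nu^{-1}$, $\nu_0=\delta^{-1/2}\nu$, the modular function $\delta$, the coefficient description of $d\varphi/d\nu_0$) and \thmref{Plancherel} have to be used and not merely quoted. A secondary difficulty is the passage to the limit over the approximate unit and the identification $T=T'$: since $T$ is unbounded it is only the homogeneity degree $-1/p$ that recovers $T$ from its pairings, so one must keep track of domains and cores throughout; and the endpoint $q=\infty$ requires the separate normality argument because $(L^\infty)^{*}$ is strictly larger than $L^1$. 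Apart from these points the argument is a transcription of Terp's proof for groups; the mixed norms $\|\cdot\|_{p,q}$ play no role here, entering only when concrete $f\in C_c(G)$ are substituted into $(ii)$.
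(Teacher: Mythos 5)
The paper offers no proof of this proposition at all: it is imported verbatim from Terp's unpublished preprint as \cite[Proposition 2.3]{ter:modular}, so there is no in-paper argument to compare against. Your plan --- the trace-pairing identity $(T\xi|S\eta)=\int L(\eta)^*S^*TL(\xi)\,d\psi_0$, H\"older's inequality for $(i)\Rightarrow(ii)$, and the duality $L^q(M,\psi_0)^*=L^p(M,\psi_0)$ together with an approximate unit for the converse, with a separate normality argument at $p=1$ --- is a faithful reconstruction of Terp's own proof, and you correctly flag the genuinely delicate points (the identity cannot be invoked as stated in the direction $(ii)\Rightarrow(i)$ since $S^*T$ is not yet known to be integrable, the identification $T=T'$ needs homogeneity and not mere density, and, as a small slip, a $(-1/p)$-homogeneous operator is affiliated with $M$ only when $p=\infty$).
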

\medskip
In the sequel, we
assume that
$1<p<2$,  we define $q$ by $1/p+1/q=1$ and we let $f\in C_c(G)$ such that
$\max{(\|f\|_{p,q},\|f^*\|_{p,q})}\le 1$. We let
$M(x,y)=\max{(\|f^x\|_p,\|f_y\|_p)}$. We choose $\epsilon>0$ such that
$\epsilon^{(q-p)}\|f\|_p^p\le 1$ and we set
$M_\epsilon(x,y)=\max{(M(x,y),\epsilon)}$. We define for $z\in{\bf C}$ and
$\gamma\in G$
\begin{eqnarray}
f_z(\gamma)&=&sgn f(\gamma)|f(\gamma)|^{pz} 
M_\epsilon(r(\gamma),s(\gamma))^{q-z(p+q)}
\quad\hbox{if}\quad f(\gamma)\not=0\nonumber \\ 
f_z(\gamma)&=&0\quad\hbox{if}\quad
f(\gamma)=0.\nonumber
\end{eqnarray}
Then $f_z$ belongs to $C_c(G)$. We denote by $\Omega$ the open strip $\{z\in {\bf C}: 1/2\le \Re z\le 1\}$.  We have the following lemma which is proved exactly as \cite[Lemma 4.1]{ter:modular}.

 \begin{lem}\label{bounded} Let $p\in [1,2]$, $\Omega\subset {\bf C}$, $f,f_z\in C_c(G)$ be as above. Let $\xi\in {\mathcal A}_l$. Then
\begin{enumerate}
\item for each $z\in\overline\Omega$, the function 
$$\xi_z={\mathcal F}_{1/z}(f_z)\xi=\Delta^{1-z\over 2}L(f_z)\Delta^{1-z\over 2}\xi$$ is defined and belongs to $H$;
\item the function $z\mapsto \xi_z$ with values in $H$ is bounded on $\overline\Omega$;
\item for each $\eta\in H$, the scalar function $z\mapsto (\xi_z,\eta)$ is continuous on $\overline\Omega$ and analytic on $\Omega$.
\end{enumerate}
\end{lem}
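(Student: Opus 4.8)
\noindent The plan is to carry out, with the obvious changes, the Stein--interpolation argument of \cite[Lemma~4.1]{ter:modular}. The three assertions come from: good control of the analytic family $f_z$; the operator identities of the commutation Lemma; sharp-enough estimates on the two edges $\Re z=1$ and $\Re z=\tfrac12$ of $\Omega$; and a vector-valued three-lines (Phragm\'en--Lindel\"of) argument, with Vitali's theorem for the analyticity in (iii). As preliminaries: since $M_\epsilon\ge\epsilon>0$ is bounded away from $0$ and $\infty$ on the compact set $K=\operatorname{supp}f$ and $|f|$ is bounded, each $f_z$ lies in $C_c(G)$ with $\operatorname{supp}f_z\subseteq K$; $z\mapsto f_z(\gamma)$ is entire for every $\gamma$; and $|f_z|=|f|^{\,p\Re z}M_\epsilon^{\,q-(\Re z)(p+q)}$ depends on $z$ only through $\Re z$, hence is bounded on $K$ uniformly along each vertical line. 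The commutation Lemma gives, on $C_c(G)$,
$$\Delta^{\frac{1-z}{2}}L(f_z)\Delta^{\frac{1-z}{2}}=\Delta^{1-z}L\bigl(\delta^{\frac{z-1}{2}}f_z\bigr)=L\bigl(\delta^{\frac{1-z}{2}}f_z\bigr)\Delta^{1-z},$$
with the twisted functions again in $C_c(G)$; since $C_c(G)$ is a core for $\Delta^{1-z}$, the closures of these three operators coincide and equal $\mathcal F_{1/z}(f_z)$.

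\noindent\emph{The two edges.} Fix $\xi\in{\mathcal A}_l$. On $\Re z=1$ the exponent $\tfrac{1-z}{2}$ is purely imaginary, $\Delta^{1-z}$ is a unitary multiplication operator, and $\mathcal F_{1/z}(f_z)=L\bigl(\delta^{\frac{1-z}{2}}f_z\bigr)\Delta^{1-z}$ is bounded; there $|f_z|=|f|^pM_\epsilon^{-p}$, so the Cauchy--Schwarz operator estimate $\|L(g)\|\le\max(\|g\|_{1,\infty},\|g^*\|_{1,\infty})$, together with $M_\epsilon(r\gamma,s\gamma)\ge\max(\|f^{r\gamma}\|_p,\|f_{s\gamma}\|_p)$ and the normalization $\max(\|f\|_{p,q},\|f^*\|_{p,q})\le1$, gives $\|\mathcal F_{1/z}(f_z)\|\le1$: thus $\xi_z$ is defined for all $\xi\in H$, with $\|\xi_z\|\le\|\xi\|$, uniformly in $\Im z$. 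On $\Re z=\tfrac12$, writing $z=\tfrac12+it$ and commuting the $\Delta^{-it/2}$-factors one gets $\xi_z=\Delta^{-it/2}\,\mathcal F_2(f_z)\bigl(\Delta^{-it/2}\xi\bigr)$; here $\Delta^{-it/2}\xi\in{\mathcal A}_l$ with the same $L$-norm, and $\mathcal F_2(f_z)\in L^2(M,\psi_0)$ by \thmref{Plancherel}. Applying the spatial Radon--Nikodym identity $\|(d\varphi/d\psi_0)^{1/2}\eta\|^2=\varphi\bigl(L(\eta)L(\eta)^*\bigr)$, valid for $\eta\in{\mathcal A}_l$, to the positive normal functional $\varphi$ with $d\varphi/d\psi_0=|\mathcal F_2(f_z)|^2$ yields the bound $\|T\eta\|\le\|T\|_2\,\|L(\eta)\|$ for every $T\in L^2(M,\psi_0)$, $\eta\in{\mathcal A}_l$; hence $\xi_z$ is defined with $\|\xi_z\|\le\|f_z\|_{L^2(\nu_0)}\,\|L(\xi)\|$. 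Finally $\|f_z\|_{L^2(\nu_0)}^2=\int|f|^pM_\epsilon^{\,q-p}\,d\nu_0<\infty$, uniformly in $\Im z$: use $M_\epsilon^{\,q-p}\le\|f^{r\gamma}\|_p^{\,q-p}+\|f_{s\gamma}\|_p^{\,q-p}+\epsilon^{\,q-p}$ and fiber the resulting three integrals over $r$, over $s$ and over $r$, using the normalization for the first two terms and the choice $\epsilon^{\,q-p}\|f\|_p^p\le1$ for the third (the factors $\delta^{\pm1/2}$ are bounded on $K$). This settles (i) on $\partial\Omega$.

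\noindent\emph{From the edges to the strip.} For $\xi\in C_c(G)$ the displayed identities show that $\xi_z\in C_c(G)$, that $z\mapsto\xi_z$ is entire, and --- since $\delta^{\frac{1-z}{2}}\xi$ and $f_z$ are all supported in one fixed compact set and $\delta$ is continuous --- that $z\mapsto\xi_z$ is bounded on $\overline\Omega$; Phragm\'en--Lindel\"of then combines with the two edge bounds to give $\|\xi_z\|\le\max\bigl(\|\xi\|,\,C\|L(\xi)\|\bigr)$ for all $z\in\overline\Omega$, with $C=\sup_{\Re z=1/2}\|f_z\|_{L^2(\nu_0)}<\infty$. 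For general $\xi\in{\mathcal A}_l$ one passes to the limit: choosing $\xi^{(n)}\in C_c(G)$ approximating $\xi$ in $H$ and whose operators $L(\xi^{(n)})$ approximate $L(\xi)$ in norm --- the approximation supplied by the modular-Hilbert-algebra formalism, exactly as in Terp --- linearity of $\zeta\mapsto\zeta_z$ and the bound just proved make $(\xi^{(n)}_z)_n$ Cauchy in $H$ uniformly in $z\in\overline\Omega$; as $\mathcal F_{1/z}(f_z)$ is closed, its limit is $\mathcal F_{1/z}(f_z)\xi=\xi_z$. This gives (i) and, with $\|\xi_z\|\le\max(\|\xi\|,C\|L(\xi)\|)$, (ii). For (iii): for $\xi,\eta\in C_c(G)$ the scalar $(\xi_z,\eta)$ is an integral, over a fixed compact set, of an integrand entire in $z$ with locally uniform bounds, hence entire and continuous on $\overline\Omega$; for general $\xi\in{\mathcal A}_l$, $\eta\in H$ the uniform convergence $(\xi^{(n)}_z,\eta^{(n)})\to(\xi_z,\eta)$ on $\overline\Omega$ (with $\eta^{(n)}\in C_c(G)$, $\eta^{(n)}\to\eta$) exhibits $z\mapsto(\xi_z,\eta)$ as a locally uniform limit of functions continuous on $\overline\Omega$ and analytic on $\Omega$, so it is one too.

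\noindent\emph{Main obstacle.} The delicate part is the unbounded-operator bookkeeping behind (i): making sure that the closure $\mathcal F_{1/z}(f_z)$ is genuinely defined on all of ${\mathcal A}_l$ for \emph{every} $z$ in the closed strip; that the rewritings coming from the commutation Lemma survive passage to closures; and that the two rather different edge mechanisms --- a uniformly bounded operator on $\Re z=1$, versus an $L^2(M,\psi_0)$-operator on $\Re z=\tfrac12$ (where ${\mathcal A}_l$ enters only through $\|T\eta\|\le\|T\|_2\|L(\eta)\|$) --- splice to the interior through Stein's scheme, equivalently through the $C_c(G)$-approximation above. This is handled verbatim as in \cite[Lemma~4.1]{ter:modular}; the only inputs particular to the groupoid setting are that $C_c(G)$ is an essential domain for $\Delta^z$ (our continuity hypotheses) and the two extremal cases $p=1$ and $p=2$ of the inequality checked above.
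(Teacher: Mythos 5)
Your edge estimates are essentially the paper's (your direct observation that $M_\epsilon\ge\|f^{r(\gamma)}\|_p$ pointwise even cleans up the $\Re z=1$ bound from $2$ to $1$), and running the three-lines argument on $z\mapsto\xi_z$ itself is fine for $\xi\in C_c(G)$. The gap is in your passage from $C_c(G)$ to all of ${\mathcal A}_l$: you need $\xi^{(n)}\in C_c(G)$ with $\xi^{(n)}\to\xi$ in $H$ \emph{and} $L(\xi^{(n)})\to L(\xi)$ in operator norm, and no such approximation exists in general. The operator-norm closure of $\{L(g):g\in C_c(G)\}$ is only the reduced $C^*$-algebra, whereas $L({\mathcal A}_l)$ reaches much further into the von Neumann algebra $M$: already for $G=\mathbf{R}$ the vector $\xi$ with Fourier transform $1_{[0,1]}$ is left bounded, yet $L(\xi)$ (multiplication by $1_{[0,1]}$ on the Fourier side) is at operator-norm distance at least $1/2$ from every $L(g)$ with $g\in L^1$. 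So your sequence $(\xi^{(n)}_z)_n$ need not be Cauchy, and the argument for (i), (ii) and the approximation half of (iii) collapses. (Replacing norm convergence by $\sup_n\|L(\xi^{(n)})\|<\infty$ together with weak closedness of the graph of ${\mathcal F}_{1/z}(f_z)$ could salvage the scheme, but producing such a bounded approximating sequence inside $C_c(G)$ is itself a Kaplansky-type statement you would have to prove.)

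The paper sidesteps this entirely by dualizing: for fixed, arbitrary $\xi\in{\mathcal A}_l$ it applies Phragm\'en--Lindel\"of not to the vector-valued function $z\mapsto\xi_z$ but to the scalar functions
$$H_\eta(z)=\int\xi(\gamma)\,\overline{{\mathcal F}_{1/\overline z}(f_z^*)\eta(\gamma)}\,d\nu^{-1}(\gamma),\qquad \eta\in C_c(G),$$
which are defined, bounded and analytic on $\overline\Omega$ for \emph{every} $\xi\in H$ because the compactly supported test vector is now $\eta$, not $\xi$; your two edge estimates give exactly $|H_\eta(z)|\le 2\max(\|\xi\|_2,\|L(\xi)\|)\,\|\eta\|_2$ on $\partial\Omega$, hence on all of $\overline\Omega$, and Riesz representation then produces $\xi_z$ with no approximation of $\xi$ at all; (iii) for general $\eta\in H$ follows by approximating only $\eta$. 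You should reorganize along these lines. A secondary point: on $\Re z=\tfrac12$ the paper bounds $\int|f_z|^2\,d\nu_0$ by $\bigl(\int|f_z|^2\,d\nu\bigr)^{1/2}\bigl(\int|f_z|^2\,d\nu^{-1}\bigr)^{1/2}\le 3$ via Cauchy--Schwarz rather than invoking boundedness of $\delta^{\pm1/2}$ on the support of $f$; your $\delta$-dependent constant still proves the lemma as stated, but it destroys the explicit bound $\|\xi_z\|_2\le 2\|L(\xi)\|$ that the paper reuses in the proof of the next lemma.
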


\begin{proof}
The proof given in \cite[Lemma 4.1]{ter:modular} applies with little change. To prove $(i)$ and $(ii)$, one introduces $\eta\in C_c(G)$ and considers the integral
$$H_\eta(z)=\int \xi(\gamma)\overline{{\mathcal F}_{1/{\overline z}}(f_z^*)\eta(\gamma)}d\nu^{-1}(\gamma).$$
By the use of Fubini's theorem, one checks that this is well defined and that
 $$H_\eta(z)=\int \xi_z(\gamma)\overline{\eta(\gamma)} d\nu^{-1}(\gamma).$$
Then, one proves that there exists $C\ge 0$ such that $|H_\eta(z)|\le \|\eta\|_2$. To do that, one applies the Phragmen-Lindel\"of principle. The function $z\to H_\eta(z)$ is bounded continuous on $\overline\Omega$ and analytic on $\Omega$. It suffices to estimate it on the lines $\Re z=1$ and $\Re z=1/2$. 
 
Suppose that $z=1+it$, where $t$ is real. Then, 
$$\xi_z=\Delta^{-it/2}L(f_z)\Delta^{-it/2}\xi$$
belongs to $H$ and we have
$$|H_\eta(z)|\le \|L(f_z)\|\|\xi\|_2\|\eta\|_2.$$
In order to estimate the operator norm of $L(f_z)$, we use the inequality
$$\|L(f_z)\|\le\max{(\|f_z\|_{1,\infty},\|{f_z}^*\|_{1,\infty})}.$$
Let us fix $x\in G^{(0)}$ and estimate 
$$\int |f_z|d\lambda^x=\int |f(\gamma)|^p 
M_\epsilon(x,s(\gamma))^{-p}d\lambda^x(\gamma).$$ 
We write $G^x$ as
the disjoint union $A\cup B$, where $A=\{\gamma\in G^x:
M(x,s(\gamma))\le\epsilon\}$ and $B=G^x\setminus A$. If $\gamma\in A$,
$M_\epsilon(x,s(\gamma))=M(x,s(\gamma))\ge \|f^x\|_p$. Therefore
$$\int_A |f_z|d\lambda^x\le \|f^x\|_p^{-p}\int_A|f(\gamma)|^p 
d\lambda^x(\gamma)\le 1.$$
On the other hand, if $B$ is non-empty, then $\|f^x\|_p<\epsilon$ and for all
$\gamma\in B$,
$M_\epsilon(x,s(\gamma))=\epsilon$. Therefore,
$$\int_B |f_z|d\lambda^x\le \epsilon^{-p}\int_B|f(\gamma)|^p 
d\lambda^x(\gamma)\le \epsilon^{-p}\epsilon^p\le 1.$$
This shows that $\|f_z\|_{1,\infty}\le 2$. One shows similarly
that $\|{f_z}^*\|_{1,\infty}\le 2$. This gives $\|L(f_z)\|\le 2$ and
$$|H_\eta(z)|\le 2\|\xi\|_2\|\eta\|_2.$$

Suppose next that $z={1\over 2}+it$, where $t$ is real. Then, 
$$\xi_z=\Delta^{-it}{\mathcal F}_2(\delta^{it/2}f_z)\xi.$$
This also belongs to $H$ because the domain of operators in $L^2(M,\psi_0)$ contains ${\mathcal A}_l$. We deduce the inequality 
$$|H_\eta(z)|\le \|{\mathcal F}_2(\delta^{it/2}f_z)\xi\|_2\|\eta\|_2.$$
As a corollary of \propref{characterization}, we have the inequality
$$\|{\mathcal F}_2(\delta^{it/2}f_z)\xi\|_2\le\|{\mathcal F}_2(\delta^{it/2}f_z)\|_2\|L(\xi)\|.$$
We use the Plancherel \thmref{Plancherel} to evaluate the norm
$$\|{\mathcal F}_2(\delta^{it/2}f_z)\|_2=\big(\int
|f_z|^2d\nu_0\big)^{1/2}\le \big(\int
|f_z|^2d\nu\int
|f_z|^2d\nu^{-1}\big)^{1/4}.$$
Let us estimate the integral 
$$\int
|f_z|^2d\nu=\int
|f(\gamma)|^p
M_\epsilon(r(\gamma),s(\gamma))^{(q-p)}d(\mu\circ\lambda)(\gamma).$$
We write the domain of summation as the union of three parts
$$A=\{\gamma: \|f^{r(\gamma)}\|_p\ge\max{( \|f_{s(\gamma)}\|_p,\epsilon)}\},$$
$$B=\{\gamma: \|f_{s(\gamma)}\|_p \ge\max{(
\|f^{r(\gamma)}\|_p,\epsilon)}\}$$
and
$$C=\{\gamma: M(r(\gamma),s(\gamma))\le \epsilon\}.$$
If $\gamma\in A$, $M_\epsilon(r(\gamma),s(\gamma))=\|f^{r(\gamma)}\|_p$, therefore
$$\int_A
|f_z|^2d(\mu\circ\lambda)\le\int(\int
|f(\gamma)|^p
d\lambda^x(\gamma))\|f^x\|^{-p}_p\|f^x\|_p^{q}d\mu(x)=\|f\|_{p,q}^{q}\le 1.$$
Similarly,
$$\int_B
|f_z|^2d(\mu\circ\lambda)\le \|f^*\|_{p,q}^{q}\le 1.$$
Finally, if $\gamma\in C$, $M_\epsilon(r(\gamma),s(\gamma))=\epsilon$ and
$$\int_C
|f_z|^2d(\mu\circ\lambda)=\epsilon^{(q-p)}\int_R
|f|^p d(\mu\circ\lambda)\le \epsilon^{(q-p)}\|f\|_p^p.$$
Because of our choice of $\epsilon>0$, this is majorized by 1. The whole integral
is majorized by 3.  One shows in the same fashion that the integral $\int
|f_z|^2d\nu^{-1}$ is also majorized by 3. Therefore,
$\|{\mathcal F}_2(\delta^{it/2}f_z)\|_2\le
\sqrt 3\le 2$. This gives
$$|H_\eta(z)|\le 2\|L(\xi)\|\|\eta\|_2$$ 
on the line $Re z=1/2$.

This proves the desired inequality with $C=\max(\|\xi\|_2, \|L(\xi)\|)$ and $(i)$ and $(ii)$. We have already proved $(iii)$ for $\eta\in C_c(G)$. The general case follows by approximation.
\end{proof}

The proof of the following lemma, which is exactly \cite[Lemma 4.2]{ter:modular}, does not require any modification.

\begin{lem} Let $p\in [1,2]$ and let $q$ be defined by $1/p+1/q=1$. Let $f\in C_c(G)$ be as above and let $S\in L^p(M,\psi_0)$. Then, for all $\xi\in {\mathcal A}_l$ and $\eta\in {\mathcal A}_l\cap Dom(S)$, we have:
$$|({\mathcal F}_p(f)\xi, S\eta)| \le 2 \|S\|_p \|L(\xi)\| \|L(\eta)\|.$$
Note that $\xi\in Dom({\mathcal F}_p(f))$ by \lemref{bounded}.
\end{lem}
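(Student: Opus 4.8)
The plan is to run the three-line complex interpolation argument of Kosaki--Terp, using \lemref{bounded} to supply the analytic family and \propref{characterization} to convert the boundary estimates into the desired bound. Fix $p\in[1,2]$, let $q$ be its conjugate exponent, and let $f\in C_c(G)$ with $\max(\|f\|_{p,q},\|f^*\|_{p,q})\le 1$, together with the auxiliary family $f_z\in C_c(G)$ and the strip $\Omega=\{1/2\le\Re z\le 1\}$ as constructed above. Given $S\in L^p(M,\psi_0)$, $\xi\in\A_l$ and $\eta\in\A_l\cap Dom(S)$, the idea is to build a single scalar function on $\overline\Omega$ whose value at the interior point $z=1/p$ (equivalently $1/z=p$) recovers the pairing $({\mathcal F}_p(f)\xi,S\eta)$, and which is controlled on the two boundary lines by the $p=1$ and $p=2$ cases of the inequality already established in the paper.

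Concretely, I would set
$$\Phi(z)=(\xi_z, S_{\bar z}\eta),$$
where $\xi_z={\mathcal F}_{1/z}(f_z)\xi=\Delta^{(1-z)/2}L(f_z)\Delta^{(1-z)/2}\xi$ is the family from \lemref{bounded}, and $S_w$ is the corresponding analytic family of operators attached to $S$ by Terp's interpolation machinery (the ``complex powers'' family for $L^p(M,\psi_0)$, normalized so that $S_{1/p}=S$ and so that on $\Re w=1/2$ it lands in $L^2$ with controlled $2$-norm, and on $\Re w=$ the other endpoint it is bounded in operator norm). By \lemref{bounded}(iii) and the analogous properties of $S_w$, $\Phi$ is bounded and continuous on $\overline\Omega$ and analytic on $\Omega$, so the Phragm\'en--Lindel\"of (three-lines) principle applies. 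At $z=1/p$ one has $\xi_{1/p}={\mathcal F}_p(f)\xi$ and $S_{\overline{1/p}}=S$, so $\Phi(1/p)=({\mathcal F}_p(f)\xi,S\eta)$, which is exactly what we wish to bound.

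The two boundary estimates are then: on $\Re z=1$, $\xi_z$ is bounded in $H$ by $2\|L(\xi)\|$ (wait---by \lemref{bounded} it is bounded in $\|\cdot\|_2$ by $2\|L(\xi)\|$, using the $\|L(f_z)\|\le 2$ estimate there) and $S_{\bar z}\eta$ is bounded by $\|S\|_p\|L(\eta)\|$ via the operator-norm control of the $S$-family, giving $|\Phi(z)|\le 2\|S\|_p\|L(\xi)\|\|L(\eta)\|$; on $\Re z=1/2$, one writes $\xi_z=\Delta^{-it}{\mathcal F}_2(\delta^{it/2}f_z)\xi$ so that $\|\xi_z\|_2\le 2\|L(\xi)\|$ by \lemref{bounded}, while $S_{\bar z}\in L^2(M,\psi_0)$ with $2$-norm bounded by $\|S\|_p$, so by Cauchy--Schwarz and \propref{characterization} $|\Phi(z)|\le 2\|S\|_p\|L(\xi)\|\|L(\eta)\|$ again. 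The three-lines lemma then yields $|\Phi(1/p)|\le 2\|S\|_p\|L(\xi)\|\|L(\eta)\|$, which is the claim; the concluding remark that $\xi\in Dom({\mathcal F}_p(f))$ is immediate from \lemref{bounded}(i).

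The main obstacle---and the reason the paper defers to \cite[Lemma 4.2]{ter:modular}---is organizing the analytic family $S_w$ associated to $S\in L^p(M,\psi_0)$ and checking that it has the right boundary behaviour (operator-norm bounded on one edge, square-integrable with controlled $2$-norm on the middle line, and reducing to $S$ at $w=1/p$), so that $\Phi$ is genuinely analytic and the boundary pairings make literal sense for $\xi,\eta$ only assumed left bounded. Everything else is bookkeeping: matching the parametrization $1/z$ in ${\mathcal F}_{1/z}$ against the conjugate-exponent convention, and invoking \lemref{bounded} and \propref{characterization} verbatim. Since the groupoid-specific input has already been absorbed into \lemref{bounded} and the $p=1,2$ endpoint inequalities, the interpolation step itself is formally identical to Terp's, so I would simply cite \cite[Lemma 4.2]{ter:modular} and note that no modification is needed.
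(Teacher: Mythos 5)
Your overall strategy---a three-lines argument pairing the family $\xi_z$ of \lemref{bounded} against a complex-powers family attached to $S$, evaluated at $z=1/p$, with the whole thing ultimately deferred to Terp's Lemma~4.2---is exactly the paper's route: the paper takes the polar decomposition $S=U|S|$, sets $\eta_z=U|S|^{pz}\eta$ (so $\eta_{1/p}=S\eta$) after reducing to $\eta\in{\mathcal A}_l\cap Dom(|S|^p)$ via Terp's Lemma~2.5, and studies $H(z)=(\xi_z,\eta_{\overline z})$. However, your description of the boundary estimate on the line $\Re z=1$ is wrong in two respects, and as written the argument would not close there. First, \lemref{bounded} yields $\|\xi_z\|_2\le 2\|L(\xi)\|$ only on $\Re z=1/2$ (via $\|{\mathcal F}_2(\delta^{it/2}f_z)\|_2\le 2$ and \propref{characterization}); on $\Re z=1$ it yields only $\|\xi_z\|_2\le\|L(f_z)\|\,\|\xi\|_2\le 2\|\xi\|_2$, which is not the bound you need. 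Second, on $\Re z=1$ the $S$-family is not operator-norm controlled: $\eta_{\overline z}=U|S|^{p\overline z}\eta$ with $\Re(p\overline z)=p$, and $|S|^{p}$ is an element of $L^1(M,\psi_0)$ of norm $\|S\|_p^p$, in general unbounded, so there is no estimate of the form $\|\eta_{\overline z}\|_2\le\|S\|_p\|L(\eta)\|$ on that line. You have the roles of the two factors reversed: on $\Re z=1$ it is the $f$-side that is bounded in operator norm ($\|L(\delta^{it/2}f_z)\|\le 2$) and the $S$-side that lies only in $L^1$, and the pairing must be estimated by the $L^\infty$--$L^1$ duality of \propref{characterization} (the case $p=\infty$), namely $|H(z)|=|(L(\delta^{it/2}f_z)\xi,\,\Delta^{it}U|S|^{it}|S|^p\eta)|\le\|L(\delta^{it/2}f_z)\|\,\|L(\xi)\|\,\|L(\eta)\|$, rather than by Cauchy--Schwarz with two $H$-norm bounds. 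Cauchy--Schwarz is used only on $\Re z=1/2$, where your account (both factors in $L^2$, $\|U|S|^{p/2}\|_2=\|S\|_p^{p/2}$, and \propref{characterization} with $p=2$ to get $\|\eta_{\overline z}\|_2\le\|L(\eta)\|$) matches the paper. With the $\Re z=1$ estimate corrected as above, the rest of your outline---analyticity and boundedness of $H$ on $\overline\Omega$, Phragm\'en--Lindel\"of, evaluation at $z=1/p$---is exactly the paper's proof.
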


\begin{proof} We just recall how the proof goes. We asssume  $\|S\|_p\le 1$. By \cite[Lemma2.5]{ter:modular}, it suffices to consider the case $\eta\in {\mathcal A}_l\cap Dom(|S|^p)$. Let $S=U|S|$ be the polar decomposition of $S$. For $z\in {\overline\Omega}$, one defines
$$\eta_z=U|S|^{pz}\eta.$$
Then the function $z\mapsto\eta_z\in H$ is bounded continuous on $\overline\Omega$ and analytic on $\Omega$. We then define $H(z)=(\xi_z,\eta_{\overline z})$. The scalar function $z\mapsto H(z)$ satisfies the same properties. One proves the estimate
$$|H(z)|\le 2 \|L(\xi)\| \|L(\eta)\|$$
for $z\in \Omega$ by checking it on the lines $\Re z=1$ and $\Re z=1/2$ as in the previous lemma. 

For $z=1+it$, where $t$ is real, one has 
\begin{eqnarray}
|H(z)|&=&(L(\delta^{it/2}f_z)\xi, \Delta^{it}U|S|^{it}|S|^p\eta)\nonumber \\ 
&\le&\|L(\delta^{it/2}f_z)\|\|L(\xi)\|\|L(\eta)\|.\nonumber
\end{eqnarray}
by applying \propref{characterization} with $p=\infty$. Then one uses the estimate
$$\|L(\delta^{it/2}f_z)\|\le 2$$ established in the previous lemma.

For $z={1\over 2}+it$, where $t$ is real, one simply uses Cauchy-Schwarz' inequality
$$|H(z)|\le \|\xi_z\|_2\|\eta_{\overline z}\|_2.$$
We have established earlier the estimate $\|\xi_z\|_2\le 2\|L(\xi)\|$. On the other hand,
$$\|\eta_{\overline z}\|_2=\|U|S|^{p/2}|S|^{-pit}\eta\|_2$$
can be also estimated by \propref{characterization} with $p=2$:
$$\|U|S|^{p/2}|S|^{-pit}\eta\|_2\le\|U|S|^{p/2}\|_2 \|L(|S|^{-pit}\eta)\|\le \|L(\eta)\|.$$
In particular, we have
$$|H(1/p)|=|({\mathcal F}_p(f)\xi, S\eta)|\le 2\|L(\xi)\|\|L(\eta)\|.$$

\end{proof}

One deduces from \propref{characterization} that for $f\in C_c(G)$, ${\mathcal F}_p(f)$ belongs to $L^q(M,\psi_0)$ for all $p\in [1,2]$ where $1/p+1/q=1$ and
$$\|{\mathcal F}_p(f)\|_{L^q(M)}\le 2\max(\|f\|_{p,q},\|f^*\|_{p,q}).$$ This inequality can be improved.
\begin{thm} (Hausdorff-Young) Let $p\in [1,2]$ and $f\in C_c(G)$.  Then ${\mathcal F}_p(f)$ belongs to $L^q(M,\psi_0)$  where $1/p+1/q=1$ and
$$\|{\mathcal F}_p(f)\|_{L^q(M)}\le \max(\|f\|_{p,q},\|f^*\|_{p,q})$$
where $1/p+1/q=1$.
\end{thm}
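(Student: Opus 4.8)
The plan is to remove the spurious factor of $2$ from the inequality
$$\|{\mathcal F}_p(f)\|_{L^q(M)}\le 2\max(\|f\|_{p,q},\|f^*\|_{p,q})$$
already established, by the standard tensor-power (or iteration) trick, exactly as in \cite[\S4]{ter:modular}. First I would observe that the bound $\|{\mathcal F}_p(f)\|_{L^q(M)}\le 2\max(\|f\|_{p,q},\|f^*\|_{p,q})$ holds \emph{uniformly} for all measured groupoids $(G,\lambda,\mu)$ satisfying our standing hypotheses; the constant $2$ does not depend on $G$. The strategy is then to apply this uniform bound to the $n$-fold product groupoid $G^n=G\times\cdots\times G$, equipped with the product Haar system $\lambda^n$ and the product quasi-invariant measure $\mu^n$, whose modular function is $\delta^n(\gamma_1,\dots,\gamma_n)=\delta(\gamma_1)\cdots\delta(\gamma_n)$, and to the function $f^{\otimes n}(\gamma_1,\dots,\gamma_n)=f(\gamma_1)\cdots f(\gamma_n)\in C_c(G^n)$.

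The key multiplicativity facts to check are: (a) the mixed norm is multiplicative, $\|f^{\otimes n}\|_{p,q}=\|f\|_{p,q}^n$, which is immediate from Fubini since the inner $L^p(\lambda^x)$-integral factors and then the outer $L^q(\mu)$-integral factors; likewise $(f^{\otimes n})^*=(f^*)^{\otimes n}$, so $\max(\|f^{\otimes n}\|_{p,q},\|(f^{\otimes n})^*\|_{p,q})=\max(\|f\|_{p,q},\|f^*\|_{p,q})^n$; and (b) the $L^q$-norm of the Fourier transform is multiplicative, $\|{\mathcal F}_p^{(n)}(f^{\otimes n})\|_{L^q(M^n)}=\|{\mathcal F}_p(f)\|_{L^q(M)}^n$. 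For (b) I would note that $VN(G^n)$ is canonically the von Neumann tensor product $VN(G)^{\otimes n}$ with dual weight $\psi_0^{\otimes n}$, that $L^2(G^n,\nu_0^n)=L^2(G,\nu_0)^{\otimes n}$, that $\Delta_{G^n}=\Delta^{\otimes n}$ and $L(f^{\otimes n})=L(f)^{\otimes n}$ on $C_c(G^n)=C_c(G)^{\odot n}$, hence ${\mathcal F}_p^{(n)}(f^{\otimes n})=\big(\Delta^{\alpha}L(f)\Delta^{\alpha}\big)^{\otimes n}={\mathcal F}_p(f)^{\otimes n}$ (with $\alpha=1/2q$, closures agreeing); finally the $L^q$-norm on the tensor product is multiplicative, $\|T^{\otimes n}\|_{L^q(M^{\otimes n},\psi^{\otimes n})}=\|T\|_{L^q(M,\psi)}^n$, which follows from $|T^{\otimes n}|^q=|T|^{q\,\otimes n}$ and the multiplicativity of the trace-like functional $\int\,\cdot\,d\psi^{\otimes n}$ on $L^1(M^{\otimes n})$.

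Putting these together, applying the uniform bound with constant $2$ to $(G^n,\lambda^n,\mu^n)$ and $f^{\otimes n}$ gives
$$\|{\mathcal F}_p(f)\|_{L^q(M)}^n=\|{\mathcal F}_p^{(n)}(f^{\otimes n})\|_{L^q(M^n)}\le 2\max(\|f\|_{p,q},\|f^*\|_{p,q})^n,$$
whence $\|{\mathcal F}_p(f)\|_{L^q(M)}\le 2^{1/n}\max(\|f\|_{p,q},\|f^*\|_{p,q})$ for every $n\ge 1$. Letting $n\to\infty$ yields the sharp inequality $\|{\mathcal F}_p(f)\|_{L^q(M)}\le \max(\|f\|_{p,q},\|f^*\|_{p,q})$, and in particular ${\mathcal F}_p(f)\in L^q(M,\psi_0)$, as claimed.

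I expect the main obstacle to be the careful bookkeeping in step (b): verifying that the product groupoid $(G^n,\lambda^n,\mu^n)$ genuinely falls within our standing hypotheses (second countable locally compact Hausdorff, continuous Haar system, continuous modular function — all clear), and, more delicately, that the von Neumann algebraic tensor-product identifications are compatible with the spatial/dual-weight constructions, so that $L^q(VN(G^n),\psi_0^{(n)})$ is isometrically $L^q(VN(G),\psi_0)^{\otimes n}$ and ${\mathcal F}_p^{(n)}$ really is the $n$-fold tensor power of ${\mathcal F}_p$ on the dense subspace $C_c(G)^{\odot n}$. Once the identifications $\Delta_{G^n}=\Delta^{\otimes n}$, $L(f^{\otimes n})=L(f)^{\otimes n}$ and $\nu_0^n=\nu_0^{\otimes n}$ are in hand, the multiplicativity of the $L^q$-norm under tensor products is routine from the homogeneity/integration machinery of Section 2, and the $n\to\infty$ limit is trivial.
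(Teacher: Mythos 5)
Your proposal is correct and is essentially the paper's own argument: apply the uniform bound with constant $2$ to the product groupoid and a tensor power of $f$, use multiplicativity of the mixed norms and of the $L^q(M,\psi_0)$-norms under tensor products, and let the number of factors tend to infinity to remove the $2$. The only (harmless) difference is that the paper tensors the symmetrized function $f^*\otimes f$ rather than $f$ alone, i.e.\ works with $(f^*\otimes f)^{\otimes n}$ on $G^{2n}$, which makes the two mixed norms of the tensor power coincide and yields the slightly sharper conclusion $\|{\mathcal F}_p(f)\|_{L^q(M)}\le \|f\|_{p,q}^{1/2}\,\|f^*\|_{p,q}^{1/2}$; your version with $f^{\otimes n}$ gives only the stated $\max$ bound, which is all the theorem requires.
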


\begin{proof} We first observe that the mixed norms as well as the $L^p$ norms behave well with respect to products of groupoids. Explictitly, let $(G_i,\lambda_i,\mu_i), i=1,2$ be two measured groupoids and
let $(G=G_1\times G_2,\lambda=\lambda_1\otimes\lambda_2,\mu=\mu_1\otimes\mu_2)$ be their product. Given
$1\le p,q\le\infty$ and
$f_i\in C_c(G)$, we have
\begin{itemize}
\item $\|f_1\otimes f_2\|_{p,q}=\|f_1\|_{p,q}\|\| f_2\|_{p,q}$;
\item ${\mathcal F}_p(f_1\otimes f_2)={\mathcal F}^{(1)}_p(f_1)\otimes {\mathcal F}^{(2)}_p(f_2)$;
\item for $T_i\in L^q(M^{(i)},\psi_0^{(i)})$, $T=T_1\otimes T_2$ belongs to  $L^q(M,\psi_0)$ and $\|T\|_{L^q(M)}=\|T_1\|_{L^q(M^{(1)})}\|T_2\|_{L^q(M^{(2)})}$ where the indices $1,2$ refer to the corresponding groupoids. The  Hilbert space, the von Neumann algebra and the canonical weights of the product groupoid are also tensor products: $H=H^{(1)}\otimes H^{(2)}$, $M=M^{(1)}\otimes M^{(2)}$, $\varphi=\varphi^{(1)}\otimes \varphi^{(2)}$ and $\psi=\psi^{(1)}\otimes \psi^{(2)}$.
\end{itemize}
Given $f\in C_c(G)$, we form 
$$F=(f^*\otimes f)\otimes(f^*\otimes f)\otimes\ldots (f^*\otimes f)$$
where $f^*\otimes f$ is repeated $n$ times. It belongs to $C_c(G^{2n})$. According to the above 
${\mathcal F}_p(F)$ belongs to $L^q(M^{\otimes 2n},\psi^{\otimes 2n}_0)$ and
$$\|{\mathcal F}_p(F)\|_{L^q(M)}\le 2\max(\|F\|_{p,q},\|F^*\|_{p,q}).$$
Since $\|F\|_{p,q}=\|F^*\|_{p,q}=\|f\|_{p,q}^n\|f^*\|_{p,q}^n$ and $\|{\mathcal F}_p(F)\|_{L^q(M^{2n})}=\|{\mathcal F}_p(f)\|_{L^q(M)}$, this gives
$$\|{\mathcal F}_p(f)\|_{L^q(M)}\le 2^{1/2n}\|f\|_{p,q}^{1/2}\|f^*\|_{p,q}^{1/2}.$$
Passing to the limit, one gets:
$$\|{\mathcal F}_p(f)\|_{L^q(M)}\le \|f\|_{p,q}^{1/2}\|f^*\|_{p,q}^{1/2}\le \max(\|f\|_{p,q},\|f^*\|_{p,q}).$$

\end{proof}

\vskip3mm

\end{document}